\documentclass[runningheads, envcountsame]{llncs}

\usepackage[normalem]{ulem}

\usepackage{amsmath,amssymb,amsfonts}
\usepackage{pdfpages}
\usepackage{url}
\usepackage{algorithmicx, algorithm, algpseudocode}

\spnewtheorem{fact}[theorem]{Fact}{\bfseries}{\itshape}
\spnewtheorem{clm}[theorem]{Claim}{\bfseries}{\itshape}
\spnewtheorem{conj}[theorem]{Conjecture}{\bfseries}{\itshape}
\spnewtheorem{rem}[theorem]{Remark}{\bfseries}{\rm}

\newcommand{\eps}{\epsilon}

\begin{document}

\title{Approximate Counting of Matchings in  $(3,3)$-Hypergraphs\thanks{Part of research of the 3rd and 4th authors done at Emory University, Atlanta and  another part during their visits to the Institut Mittag-Leffler (Djursholm, Sweden).} }
\titlerunning{\ }
\authorrunning{\ }

\author
{ Andrzej Dudek\inst{1}\thanks{Research supported by Simons
Foundation Grant \#244712 and by a grant from the Faculty Research
and Creative Activities Award (FRACAA), Western Michigan
University.} \and Marek Karpinski\inst{2}\thanks{Research
supported by DFG grants and the Hausdorff grant EXC59-1.} \and
Andrzej Ruci\'{n}ski\inst{3}\thanks{Research supported by the
Polish NSC grant N201 604 940  and the NSF grant DMS-1102086. }
\and Edyta Szyma\'{n}ska\inst{3}\thanks{Research supported by the
Polish NSC grant N206 565 740. } }

\institute{ Western Michigan University, Kalamazoo, MI, USA,
andrzej.dudek@wmich.edu \and Department of Computer Science,
University of Bonn, Germany, marek@cs.uni-bonn.de \and Faculty of
Mathematics and Computer Science, Adam Mic\-kie\-wicz University,
Pozna\'{n}, Poland, (rucinski,edka)@amu.edu.pl }

\maketitle

\begin{abstract}
We design a fully polynomial time approximation scheme (FPTAS) for
counting the number of matchings (packings) in arbitrary 3-uniform
hypergraphs of maximum degree three, referred to as
$(3,3)$-hypergraphs. It is the first polynomial time approximation
scheme for that problem, which includes also, as a special case,
the 3D Matching counting problem for 3-partite
$(3,3)$-hypergraphs.
 The proof technique of this paper
uses the general correlation decay technique and a new combinatorial
analysis of the underlying structures of the intersection
graphs. The proof method could be also of independent interest.

\end{abstract}

\section{Introduction}\label{intro}

The computational status of approximate counting of
matchings in hypergraphs has been open for some
time now, contrary to the existence of polynomial time
approximation schemes for graphs. The matching (packing)
counting problems in hypergraphs occur naturally in
the higher dimensional free energy problems, like in
the monomer-trimer systems discussed, e.g, by Heilmann \cite{Heil}.
The corresponding optimization versions of hypergraph
matching problem relate also to various allocations problems.

This paper aims at shedding some light on the approximation
complexity of that problem in 3-uniform hypergraphs of maximum
vertex degree three (called $(3,3)$-hypergraphs or $(3,3)$-graphs
for short). This class of hypergraphs includes also so-called 3D
hypergraphs, that is, (3,3)-graphs that are 3-partite. In
\cite{KRS-analco}, based on a generalization of the canonical
path method of Jerrum and Sinclair \cite{js}, we established a
fully polynomial time randomized approximation scheme (FPRAS) for
counting matchings in the classes of $k$-uniform hypergraphs
without structures called 3-combs. However, the status of the
problem  in arbitrary $(3,3)$-graphs was left wide open among with
other general problems for 3-, 4- and 5-uniform hypergraphs (for
$k\geq 6$ it is known to be hard, see Sec.~\ref{prelim}). In
particular, the existence of an FPRAS for counting matchings in $(3,3)$-graphs was
unknown.

In this paper we design the first fully polynomial time
approximation scheme (FPTAS) for arbitrary $(3,3)$-graphs.
The method of solution depends on the general correlation
decay technique and some new structural analysis of
underlying intersections graphs based on an extension
of the classical claw-freeness notion. The proof method used
in the analysis of our algorithm could be also of independent
interest.

The paper is organized as follows. Section~\ref{prelim} contains
some basic notions and preparatory discussions.  In
Sec.~\ref{main-thm} we formulate our main results and provide the
proofs. Finally, Sec.~\ref{disc} is devoted to the summary and an
outlook for future research.

\section{Preliminaries}\label{prelim}

 \emph{A hypergraph} $H=(V,E)$ is a finite set of vertices $V$ together with a family $E$ of
distinct, nonempty subsets of vertices called edges. In this paper
we consider $k$-\emph{uniform hypergraphs} (called further
\emph{$k$-graphs}) in which, for a fixed $k\ge2$,  each edge is of
size $k$.   \emph{A matching} in a hypergraph is a set (possibly
empty) of disjoint edges.

Counting  matchings is a \#P-complete problem already for  graphs ($k=2$) as proved by Valiant \cite{valiant}. In view of this hardness barrier, researchers turned to approximate counting, which initially has been accomplished via probabilistic techniques.

 Given a function   $C$ and a random variable $Y$ (defined on some probability space), and given two real numbers $\eps,\delta>0$, we say that $Y$ is an $(\eps,
\delta)$-\emph{approximation}  of $C$ if the probability  $\mathbb{P}\left(|Y(x)-C(x)|\ge\eps C(x)\right)\le\delta.$
 A \emph{fully polynomial
randomized approximation scheme} (FPRAS) for a function $f$ on
$\{0,1\}^*$ is a randomized algorithm which, for every triple
$(\eps,\delta, x)$, with $\eps>0,\;\delta>0$, and $x\in \{0,1\}^*,$
returns an $(\eps, \delta)$-approximation
$Y$ of $f(x)$ and runs in time polynomial in  $1/\eps$,
$\log(1/\delta)$, and $|x|$.

In this paper we investigate the problem of
counting the number of matchings in hypergraphs and try to determine the status of this problem
  for $k$-graphs with bounded degrees.

Let $deg_H(v)$ be the degree of vertex $v$ in a hypergraph $H$, that is, the number of edges of
$H$ containing  $v$. We denote by $\Delta(H)$  the maximum of $deg_H(v)$ over all $v$ in $H$. We call a $k$-graph $H$ a \emph{ $(k,r)$-graph} if $\Delta(H)\le r$.  Let $\#M(k,r)$ be the problem of counting the number of matchings in  $(k,r)$-graphs.

 Our inspiration comes from new results (both positive and negative) that emerged for approximate counting of the number of independent sets in graphs with bounded degree  and shed some light on the problem $\#M(k,r)$.

Let $\#IS(d)$ [$\#IS(\le d)$] be the problem of counting the number of all independent sets in $d$-regular graphs [graphs of maximum degree bounded by $d$, that is, $(2,d)$-graphs].
Luby and Vigoda \cite{lv} established an FPRAS for $\#IS(\le 4)$. This was complemented
later by the approximation hardness results for the   higher degree instances by Dyer, Frieze and Jerrum \cite{DFJ}.
The subsequent progress has coincided with  the  revival of a deterministic technique -- the spatial correlation decay method -- based on early papers of Dobrushin \cite{dobru} and Kelly \cite{kelly}.
It resulted in constructing deterministic  approximation schemes  for counting
independent sets in several classes of graphs with  degree (and other) restrictions, as well as for  counting matchings in  graphs of bounded
degree.

\begin{definition} A fully polynomial
time approximation scheme ({\bf FPTAS}) for a function $f$ on
$\{0,1\}^*$ is a deterministic algorithm which for every pair
$(\eps, x)$ with $\eps>0$, and $x\in \{0,1\}^*,$ returns a number $y(x)$ such  that
$$|y(x)-f(x)|\le\eps f(x),$$ and runs in time polynomial in  $1/\eps$, and
$|x|$.
\end{definition}

 In 2007 Weitz \cite{wei} found an FPTAS for  $\#IS(\le5)$, while, more recently, Sly \cite{sly} and Sly and Sun \cite{ss} complemented Weitz's result by proving the approximation  hardness
 for $\#IS(6)$, that is, proving that unless NP=RP,  there exists no  FPRAS (and thus, no FPTAS) for $\#IS(6)$. By applying two reductions: from $\#IS(6)$ to $\#M(6,2)$ (taking the dual hypergraph of a 6-regular graph), and from  $\#M(k,2)$ to $\#IS(k)$ (taking the intersection graph of a $(k,2)$-graph) for $k=3,4,5$, we conclude that

 \begin{enumerate}
\item[(i)] (unless NP=RP) there exists no FPRAS for $\#M(6,2)$;

\item[(ii)] there is an FPTAS for $\#M(k,2)$ with  $k\in\{3,4,5\}$.
\end{enumerate}

 Note  that the first reduction results, in fact, in a \emph{linear} $(6,2)$-graph, so the class of hypergraphs in question is even narrower.
 (A hypergraph is called
\emph{linear} when no two edges share more
than one vertex.)
 On the other hand, by the same kind of reduction it follows from a result of Greenhill \cite{green} that \emph{exact} counting of matchings is \#P-complete already in the class of   linear $(3,2)$-graphs.

 Facts (i) and (ii) above imply  that the only interesting cases for the positive results are those for $(k,r)$-graphs with $k=3,4,5$ and $r\ge3$, and thus, the smallest one among them is that of $(3,3)$-graphs.
 Our main result establishes an FPTAS for counting the  number of matchings in this class of hypergraphs.

\section{Main Result and the Proof}\label{main-thm}

The following theorem is the main result of this paper.

 \begin{theorem}\label{main}
The algorithm {\sl CountMatchings} given in Section~\ref{p-lem}
provides an FPTAS for $\#M(3,3)$ and runs in time
$O\left(n^2(n/\eps)^{\log_{50/49}144}\right)$.
\end{theorem}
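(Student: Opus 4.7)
The plan is to follow Weitz's correlation-decay framework, but applied to the intersection graph $L(H)$ of the input $(3,3)$-hypergraph $H$: the vertices of $L(H)$ are the hyperedges of $H$, and two of them are adjacent whenever they share at least one vertex. Since matchings of $H$ correspond bijectively to independent sets of $L(H)$, counting matchings reduces to approximately computing, for each hyperedge $f$, the marginal probability $p_f$ that $f$ is included in a uniformly random matching of $H$. A standard telescoping over the edges using the identity $\#M(H) = \#M(H-f)/(1-p_f)$ converts an FPTAS for these marginals into an FPTAS for $\#M(H)$; it suffices to approximate every $p_f$ to within a multiplicative $(1\pm\eps/|E(H)|)$ factor.

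Next I would set up the tree recursion. Weitz's construction gives a self-avoiding walk tree $T_{SAW}(L(H),f)$ rooted at $f$, with appropriate boundary fixings, on which the root marginal equals $p_f$. On any tree the marginal ratio $R=p/(1-p)$ satisfies a simple local recursion over the children of the current node, so computing $R$ exactly on a truncated tree takes time proportional to the size of the truncation. The remaining question is how deep the truncation must be.

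The main obstacle, and the heart of the paper, is establishing \emph{strong spatial mixing} on $L(H)$: changing the boundary condition at distance $d$ from $f$ must perturb the root ratio by a factor contracting geometrically in $d$. The intersection graph of a $(3,3)$-hypergraph has maximum degree up to $6$, which lies beyond the regime where Weitz's independent-set analysis is known to work for arbitrary graphs (recall that $\#\mathrm{IS}(6)$ admits no FPRAS unless $\mathrm{NP}=\mathrm{RP}$). The way out is to exploit structural restrictions on $L(H)$ inherited from $3$-uniformity and degree $3$: the neighbourhood of any vertex of $L(H)$ decomposes into three cliques of size at most two, one per vertex of the underlying hyperedge, so $L(H)$ contains no induced $K_{1,4}$ and satisfies a quantitative strengthening of claw-freeness. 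A careful case analysis of these local configurations, combined with an amortised ratio/potential argument along the SAW recursion, is expected to yield a contraction factor of $49/50$ per level together with an effective branching of $144$ for the configurations that need to be tracked.

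Once the contraction is in place, the rest is routine: pick $d = O(\log_{50/49}(n/\eps))$ so that the truncation error on the root ratio is within $\eps/n$; evaluate the ratio on the truncated SAW tree in time $O(144^d) = O\bigl((n/\eps)^{\log_{50/49}144}\bigr)$; and iterate this over all $O(n)$ hyperedges while propagating the telescoping error, yielding the stated total running time $O\bigl(n^2(n/\eps)^{\log_{50/49}144}\bigr)$. The hardest technical step will almost certainly be identifying the right amortised potential on the SAW tree that forces the $49/50$ contraction in the presence of six possible neighbours; the combinatorial lemmas about the local structure of $L(H)$, exploiting its extended claw-freeness, are exactly what enable that bound.
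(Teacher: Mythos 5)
Your high-level plan is right in outline — reduce to counting independent sets in the intersection graph $G=L(H)$, observe that $G$ is 4-claw-free with $\Delta(G)\le 6$, telescope over a sequence of subgraphs, and establish correlation decay — but the mechanism you propose for the crucial decay step diverges from the paper and leaves a real gap. The paper does \emph{not} build Weitz's self-avoiding-walk tree. Instead it follows the Bayati--Gamarnik--Katz--Nair--Tetali / Fadnavis line, running a direct ratio recursion on the graph itself over carefully chosen vertex subsets. The key structural object is the \emph{simplicial block}: a set $K\subseteq V(G)$ with $\alpha(G[K])\le 2$, $|K|\le 4$, and $\delta(G[K])\ge 1$ if $|K|=4$, such that for every $v\in K$ the set $N_G(v)\setminus K$ is again a block in $G-K$. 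The paper proves (Claims~\ref{block}--\ref{block4}) that these blocks reproduce themselves under the recursion and that $V(G)$ partitions into such blocks; Fadnavis's trick handles the case where no simplicial block exists to begin with, by peeling a cut vertex $v$ and writing $Z_I(G)=Z_I(G-v)+Z_I(G-N_G[v])$. The recurrence then expresses $\Pi_G(K)=Z_I(G-K)/Z_I(G)$ in terms of $\Pi_{G-K}(K_v)$ and $\Pi_{G-K-K_v}(K_{uv})$, and the contraction bound $|\nabla f|<0.971$ is a concrete computation for the fixed 8-variable function arising from a single worst-case ``block graph,'' using the potentials $g(s)=s^{1/4}$, $h(s)=s^4$.

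The gap in your proposal is twofold. First, you assume strong spatial mixing on the SAW tree for 4-claw-free graphs of maximum degree 6, but this is not established anywhere (the paper does not prove it, and explicitly leaves open the analogous question even for their own block recursion on arbitrary 4-claw-free graphs of degree at most 6 — their Lemma~\ref{is-in-K14partialgraphs} needs the extra hypothesis, inherited from $(3,3)$-graph structure, that the neighborhood of any degree-$d$ vertex with $d\ge 5$ has at most $6-d$ isolated vertices, precisely to bound block sizes by 4). Degree 6 sits beyond the uniqueness threshold, so Weitz's generic machinery fails, and your suggested ``amortised potential along the SAW recursion'' that would force the $49/50$ factor is exactly the missing piece; asserting that it ``is expected to yield'' the desired contraction does not supply the argument. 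Second, you do not identify the combinatorial object (the simplicial 2-clique/block) whose self-reproducing property is what actually makes the recursion well-posed and the branching bounded by 12 (whence $144=12^2$). Without it, there is no recursion to analyze. If you want a SAW-tree version, you would need to prove a block-level strong-spatial-mixing statement from scratch, and that proof would likely reproduce most of the paper's block machinery anyway.
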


\emph{The intersection graph}  of a hypergraph $H$ is the graph
$G=L(H)$ with vertex set $V(G)=E(H)$ and  edge set $E(G)$
consisting of all intersecting pairs of edges of $H$. When $H$ is
a graph, the intersection graph $L(H)$ is called \emph{the line
graph} of~$H$. Graphs which are line graphs of some graphs are
characterized by 9 forbidden induced subgraphs \cite{beineke}, one of which is
the \emph{claw}, an induced copy of $K_{1,3}$. There  is no
similar characterization for intersection graphs of $k$-graphs.
Still, it is easy to observe that for any $k$-graph $H$, its
intersection graph $L(H)$  does not contain an induced copy of
$K_{1,{k+1}}$. We shall call such graphs \emph{$(k+1)$-claw-free}.

Our proof of Thm.~\ref{main} begins with an obvious observation
that counting the number of matchings in a hypergraph $H$ is
equivalent to counting the number of independent sets in the
intersection graph $G=L(H)$. More precisely, let $Z_M(H)$ be the
number of matchings in a hypergraph $H$ and, for a graph $G$, let
$Z_I(G)$ be the number of independent sets in $G$. (Note that both
quantities  count the empty set.) Then $Z_M(H)=Z_I(L(H))$.

To approximately count the number of independent sets in a graph
$G=L(H)$ for a $(3,3)$-graph $H$, we apply some of the ideas from
\cite{bgknt} (the preliminary version of this paper appeared in
\cite{bgknt-STOC}) and \cite{suka}. In \cite{bgknt}  two new
instances of FPTAS were constructed, both based on the spatial
correlation decay method. First, for $\#M(2,r)$ with any given
$r$. Then, still in \cite{bgknt}, the authors refined their
approach to yield an FPTAS for counting independent sets in
claw-free graphs of bounded clique number which contain so called
\emph{simplicial cliques}. The last restriction  has been removed
by an ingenious observation in \cite{suka}.

 Papers \cite{bgknt,suka} inspired us to seek 
  adequate methods for  $(3,3)$-graphs. Indeed, for every $(3,3)$-graph $H$ its intersection graph $G=L(H)$ is 4-claw-free and has $\Delta(G)\le 6$. This turned out to be the right approach, as we deduced our
  Thm.~\ref{main} from  a technical lemma (Lem.~\ref{is-in-K14partialgraphs} below) which constructs an FPTAS for the number of independent sets in $K_{1,4}$-free graphs $G$ with $\Delta(G)\le 6$ and an additional property stemming from their being intersection graphs of $(3,3)$-graphs.



\subsection{Proof of Theorem \ref{main} -- Sketch and Preliminaries}\label{p-thm}

We deduce Thm.~\ref{main} from a technical lemma. The assumptions
of this lemma reflect some properties of the intersection graphs
of $(3,3)$-graphs.

\begin{lemma}\label{is-in-K14partialgraphs}
There exists an FPTAS for  the problem of counting independent
sets in every 4-claw-free graph with maximum degree at most 6 and such that the neighborhood of every vertex of  degree $d\ge5$ induces a subgraph that spans a matching of size $\lfloor d/2\rfloor$.
\end{lemma}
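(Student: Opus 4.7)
The plan is to follow the computation-tree / correlation-decay paradigm developed in \cite{bgknt,suka} and adapt it to the class of graphs described in the lemma. For each vertex $v$ of the input graph $G$, define the marginal ratio
$$R_G(v)=\frac{Z_I(G\setminus N[v])}{Z_I(G\setminus\{v\})},$$
so that the probability that $v$ is occupied in the hard-core model at fugacity~$1$ equals $R_G(v)/(1+R_G(v))$, and $Z_I(G)$ itself is a telescoping product of such quantities along any vertex ordering $v_1,\dots,v_n$ (evaluated in the subgraphs induced on $\{v_i,\dots,v_n\}$). As in \cite{bgknt,suka}, the algorithm {\sl CountMatchings} will approximate each $R_G(v)$ by recursively unfolding it to a prescribed depth $L$ on a computation tree and exploiting correlation decay to control the truncation error.

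To write down the recursion, fix a vertex $v$ and enumerate the independent sets $S$ of the induced subgraph $G[N(v)]$; because $G$ is $4$-claw-free, $\alpha(G[N(v)])\le 3$, so this enumeration is over a constant-size family. The standard expansion then expresses $R_G(v)$ as a rational function of ratios $R_{G'}(u)$ at vertices $u\in N(v)$ in graphs $G'$ obtained from $G\setminus\{v\}$ by successively removing already processed neighbors, one such $G'$ per independent set $S\subseteq N(v)$. Combining $\Delta(G)\le 6$ with the bound $\alpha(G[N(v)])\le 3$ one gets a constant $B$ bounding the number of children produced in one step of the unfolding; the running time $(n/\eps)^{\log_{50/49}144}$ in Thm.~\ref{main} indicates that the right accounting gives $B\le 144$.

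The core of the argument is a contraction estimate: one shows that, in a suitable \emph{message metric} (logarithmic in the ratios, following \cite{bgknt}), the map expressing $R_G(v)$ in terms of the neighbor-ratios $R_{G'}(u)$ has Lipschitz constant at most $49/50$. Once this is in hand, choosing recursion depth $L=\Theta(\log(n/\eps)/\log(50/49))$ gives a $(1\pm\eps/n)$-approximation of each $R_G(v)$ in time $B^L=(n/\eps)^{\log_{50/49}144}$, and multiplying $n$ such ratios in the telescoping product, together with the usual error-propagation bookkeeping and an outer $O(n)$ factor for bookkeeping, yields the advertised $O(n^2(n/\eps)^{\log_{50/49}144})$ FPTAS.

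The main obstacle is establishing the uniform $49/50$ contraction. This requires a careful case analysis of the possible structures of $N(v)$ in a $4$-claw-free graph of maximum degree $6$: there are finitely many such neighborhood types (parametrized by $d=\deg(v)\le 6$ and the isomorphism type of $G[N(v)]$), and for each one the induced multivariate message map must be checked. I expect the additional hypothesis --- that for $d\in\{5,6\}$ the neighborhood $G[N(v)]$ has at most $6-d$ isolated vertices --- to be used precisely at this step: it eliminates exactly those high-degree neighborhood types for which the naive contraction bound is not strong enough. Once the worst-case neighborhood types are excluded and the remaining ones are verified to contract by $49/50$, the rest of the proof (propagation to the whole computation tree, conversion of per-step error to global multiplicative error, and the running-time calculation) is routine following the template of \cite{bgknt,suka}.
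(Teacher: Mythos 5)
Your general template --- correlation decay, a truncated recursion of depth $\Theta(\log(n/\eps))$, a uniform $49/50$ contraction --- matches the paper's, but the decomposition you propose is not the paper's, and the differences are exactly where the difficulty lives. You telescope $Z_I(G)$ vertex-by-vertex and expand each marginal ratio $R_G(v)$ over independent sets of $G[N(v)]$, which by $4$-claw-freeness have size at most $3$. The paper instead telescopes over \emph{blocks}: sets $K$ with $\alpha(G[K])\le 2$, $|K|\le 4$, and $\delta(G[K])\ge 1$ when $|K|=4$, refined to \emph{simplicial} blocks so that both the telescoping partition (Claim~\ref{block2}) and the recurrence for $\Pi_G(K)$ reproduce themselves (Claims~\ref{block}, \ref{block3}, \ref{block4}). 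Since $\alpha(G[K])\le 2$, the expansion of $\Pi_G(K)$ ranges only over singletons and non-adjacent pairs inside $K$: at most $12$ recursive terms per step, a two-level nesting, and a single explicit eight-variable message map $F(\mathbf z)$ (the essential block graph) whose gradient can be checked to stay below $0.971$. Your vertex-based expansion would have to handle independent \emph{triples} in $N(v)$ --- a three-level nesting and a much larger family of maps to control --- and the lemma's extra hypothesis does not reduce $\alpha(G[N(v)])$ below $3$ (a degree-$6$ vertex whose neighborhood spans a perfect matching satisfies it and still has $\alpha=3$). The paper's whole point is to redefine the unit of recursion so this never arises.

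There are also two concrete misreadings. First, the metric: you suggest a logarithmic transformation ``following \cite{bgknt}'', but the paper explicitly remarks that $g(s)=\log s$, $h(s)=e^s$ fails here and uses $g(s)=s^{1/4}$, $h(s)=s^4$ instead. Second, and more importantly, the role of the isolated-vertex hypothesis. You conjecture it is used to eliminate high-degree neighborhood types that spoil the contraction estimate. In the paper its role is structural, not metric: in Claims~\ref{block} and~\ref{block3} one must verify that the ``child'' sets $N_G(v)\setminus K$ and $N_G(u)\setminus(N_G(v)\cup K)$ arising in the recurrence are again blocks, i.e.\ of size at most $4$ with no isolated vertex when of size $4$. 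Without the hypothesis a child of size $5$ could occur, which is not a block, and the recursion would be ill-formed before any contraction is ever computed. Finally, you also need a device like Fadnavis's~(\ref{rec}) to manufacture a simplicial block to start the telescoping from, which a plain vertex ordering does not provide. As written, the proposal identifies the right paradigm but misses the block structure that is the actual content of the lemma, and misattributes where its hypotheses are used.
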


\begin{proof}[of Thm.~\ref{main}]
Given a $(3,3)$-graph $H$, consider its intersection graph~$G$.
Then $G$ is 4-claw-free, has maximum degree at most 6 and every
vertex neighborhood of size $d\ge5$ must span in $G$ a matching of
size $\lfloor d/2\rfloor$. This means that
Lem.~\ref{is-in-K14partialgraphs} applies to $G$ and there is an
FPTAS for counting independent sets of $G$ which is the same as
counting matchings in $H$. \qed
\end{proof}


It remains to prove Lem.~\ref{is-in-K14partialgraphs}. We begin
with underlining some properties of 4-claw-free graphs which are
relevant for our method. First, we introduce the notion of a
\emph{simplicial 2-clique} which is a generalization of a
simplicial clique introduced in \cite{CS} and utilized in
\cite{bgknt}. Throughout we assume notation $A\setminus B$ for set
differences and, for $A\subset V(G)$, we write $G-A$ for the graph
operation of deleting from $G$ all vertices belonging to $A$. In
other words, $G-A=G[V(G)\setminus A]$. Also, for any graph $G$, we
use $\delta(G)$ to denote its minimum vertex degree and
$\alpha(G)$  for the size of the largest independent set in $G$.

\begin{definition}A set $K\subseteq V(G)$ is a \emph{2-clique} if $\alpha(G[K])\leq 2.$ A 2-clique is \emph{simplicial} if
for every $v\in K,\, N_G(v)\setminus K$ is a 2-clique in $G-K$.
\end{definition}

For us a crucial property of simplicial 2-cliques is that if $G$ is a connected 
graph containing a nonempty simplicial 2-clique $K$ then it is easy to find another simplicial
2-clique in the induced subgraph $G-K$, and consequently, the whole vertex set of $G$ can be partitioned into blocks which are simplicial 2-cliques in suitable nested sequence of induced subgraphs of $G$ (see Claim \ref{block2}).

However, in the proof of Lem.~\ref{is-in-K14partialgraphs} we
shall use a special class of 2-cliques.

\begin{definition} A 2-clique $K$ in a graph $G$ is called \emph{a block} if $|K|\le4$ and  $\delta(G[K])\ge1$ whenever $|K|=4$. A block $K$ is \emph{simplicial} if for every $v\in K$ the set $N_G(v)\setminus K$ is a block in $G-K$.
\end{definition}

Next, we state a trivial but useful observation which follows straight from the above definition. (We consider the empty set as a block too.)

\begin{fact}\label{indu} If $K$ is a (simplicial) block in $G$ then for every $V'\subseteq  V(G)$ the set $K\cap V'$ is a (simplicial) block in the induced subgraph $G[V']$ of $G$.
\end{fact}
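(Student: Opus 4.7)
The plan is to verify, in order, the three defining conditions of a block (2-clique, size at most $4$, minimum-degree condition when the size equals $4$), and then bootstrap the simplicial part by reapplying the non-simplicial half to a suitable smaller instance.

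First I would observe that $\alpha$ is monotone under taking induced subgraphs: any independent set in $G[V'][K\cap V']=G[K\cap V']$ is also an independent set in $G[K]$, so $\alpha(G[K\cap V'])\le\alpha(G[K])\le 2$, proving that $K\cap V'$ is a $2$-clique in $G[V']$. The size bound $|K\cap V'|\le |K|\le 4$ is immediate. For the minimum-degree condition, note that $|K\cap V'|=4$ forces $|K|=4$ and $K\subseteq V'$, so that $G[K\cap V']=G[K]$; the bound $\delta(G[K])\ge 1$ then transfers verbatim. This handles the case when $K$ is assumed only to be a block.

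For the simplicial part, fix any $v\in K\cap V'$. Using $v\in V'$, one checks the set identity
\[
N_{G[V']}(v)\setminus(K\cap V')\;=\;\bigl(N_G(v)\setminus K\bigr)\cap V',
\]
together with the induced-subgraph identity
\[
G[V']-(K\cap V')\;=\;G[V'\setminus K]\;=\;(G-K)[V'\setminus K].
\]
By the simplicial property of $K$ in $G$, the set $N_G(v)\setminus K$ is a block in the graph $G-K$. Applying the already-proved (non-simplicial) half of the Fact to this block, with the subset $V'\setminus K$ of $V(G-K)$, shows that $(N_G(v)\setminus K)\cap(V'\setminus K)$ is a block in $(G-K)[V'\setminus K]$. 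Since $N_G(v)\setminus K$ is already disjoint from $K$, this intersection equals $(N_G(v)\setminus K)\cap V'$, which by the two identities above is precisely $N_{G[V']}(v)\setminus(K\cap V')$ viewed inside $G[V']-(K\cap V')$. Thus $K\cap V'$ is simplicial in $G[V']$.

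The argument is almost entirely a bookkeeping exercise, so the only mild obstacle is keeping the two set-theoretic identities straight and observing that applying the non-simplicial half to the neighborhood-block $N_G(v)\setminus K$ in $G-K$ gives exactly what is needed; no induction on $|V(G)|$ or additional combinatorial input is required.\qed
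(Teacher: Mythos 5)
Your proof is correct. The paper itself gives no proof of Fact~\ref{indu}, stating only that it ``follows straight from the above definition,'' and your argument is precisely the careful direct verification the authors considered trivial: monotonicity of $\alpha$ under induced subgraphs, the size and minimum-degree bookkeeping, and bootstrapping the simplicial clause by applying the non-simplicial half to the neighborhood block $N_G(v)\setminus K$ inside $G-K$ with subset $V'\setminus K$.
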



Let  a graph $G$ satisfy the assumptions of
Lem.~\ref{is-in-K14partialgraphs}. The next claim provides a
vital, ``self-reproducing'' property of blocks in $G$.

\begin{clm}\label{block} If $K$ is a simplicial block in
$G$, then for every $v\in K$ the set $N_G(v)\setminus K$ is a simplicial block in
$G-K.$
\end{clm}

\begin{proof}
Set $K_v:=N_G(v)\setminus K$ for convenience. By definition of $K$, $K_v$ is a  block. It remains to show that $K_v$ is simplicial.
Let $u\in K_v$ and let $K_u=N_G(u)\setminus(K\cup N_G(v))$. Suppose there is an independent set $I$ in $G[K_u]$ of size $|I|=3$. Then $u,v$ and the vertices of $I$ would form an induced $K_{1,4}$ in $G$ with $u$ in the center. As this is a contradiction, we  conclude that $K_u$ is a 2-clique.

To show that $K_u$ is indeed a block, note first that, by the assumptions that $\Delta(G)\le 6$, we have $|K_u|\le 5$. However, if $|K_u|=5$ then $v$ would be an isolated vertex in $G[N_G(u)]$. But $G[N_G(u)]$ spans a matching of size~3 since $\delta(u)=6$ -- a contradiction. For the same reason, if $|K_u|=4$ then regardless of the degree of $u$ in $G$ (which might be 5 or 6) there can be no isolated vertex in $G[K_u]$, since $G[K_u]$ must span a matching of size~2. 
\qed
\end{proof}

Our next claim asserts that once there is a nonempty block in $G$, one can find a suitable partition of $V(G)$ into sets which are blocks in a nested  sequence of induced subgraphs of $G$ defined by deleting these sets one after another.

\begin{clm}\label{block2} Let $K$ be a nonempty simplicial block in $G$. If, in addition, $G$ is connected then there exists a partition $V(G)=K_1\cup\cdots\cup K_m$ such that $K_1=K$ and for every $i=2,\dots,m$, $K_i$ is a nonempty, simplicial block in $G_i:=G-\bigcup_{j=1}^{i-1}K_j$.
\end{clm}

\begin{proof}
Suppose we have already constructed disjoint sets $K_1\cup\cdots \cup K_s$, for some $s\ge1$, such that  $K_1=K$, for every $i=2,\dots,s$, $K_i$ is a nonempty, simplicial block in $G_i:=G-\bigcup_{j=1}^{i-1}K_j$, and that $R_s:=V(G)\setminus \bigcup_{i=1}^{s}K_{i}\neq\emptyset$. Since $G$ is connected, there is an edge between a vertex in $R_s$ and a vertex $v\in K_i$ for some $1\le i\le s$. Since $K_i$ is a simplicial  block in $G_i$, by Fact \ref{indu}, it is also simplicial in its subgraph $G_i[V']$, where $V'=K_i\cup R_s$, that is the subgraph of $G_i$ obtained by deleting all vertices of $K_{i+1}\cup \cdots\cup K_{s}$. 
Now apply Claim \ref{block} to $G_i[V']$, $K_i$,  and $v$, to conclude that $N_G(v)\cap R_s$ is a simplicial block in $G_{s+1}:=G-\bigcup_{i=1}^{s}K_i$.
\qed
\end{proof}

Let $K_1,K_2,\ldots, K_m$ be as in Claim \ref{block2}.
Then,

\begin{equation}\label{tele}
Z_I(G)=\frac{Z_I(G_1)}{Z_I(G_2)}\cdot \frac{Z_I(G_2)}{Z_I(G_3)}\cdot\ldots\cdot\frac{Z_I(G_i)}{Z_I(G_{i+1})}\cdot\ldots\cdot \frac{Z_I(G_{m})}{Z_I(G_{m+1})},
\end{equation}
where $G_{m+1}=\emptyset$ and $Z_I(G_{m+1})=1$.
Observe that for each $i$, $G_{i+1}=G_i-K_i$ and the reciprocal of each quotient in (\ref{tele}) is precisely the probability
\begin{equation}\label{prob}
\mathbb{P}_{G_i}(K_i\cap
\mathbf{I}=\emptyset)=\frac{Z_I(G_{i}-K_i)}{Z_I(G_i)},
\end{equation}
 where
$\mathbf{I}$ is an independent set of $G_i$ chosen uniformly at
random. In view of this, the main step in building an FPTAS for
$Z_I(G)$ will be to approximate the probability
$\mathbb{P}_{G}(K_i\cap \mathbf{I}=\emptyset)$ within $1\pm\tfrac
{\eps}{n}$ (see Sec.~\ref{p-lem} and Algorithm~\ref{countis}
therein).

But what if $G$ is disconnected or does not contain a simplicial block to start with?
First, if $G=\bigcup_{i=1}^c G_i$ consists of $c$ connected components $G_1,\dots,G_c$, then, clearly
\begin{equation}\label{mnoz}
Z_I(G)=\prod_{i=1}^cZ_I(G_i)
\end{equation}
 and the problem reduces to that for connected graphs.

As for the second obstacle, Fadnavis \cite{suka} proposed a very
clever observation to cope with it. Let $G$ be a connected graph
satisfying the assumptions of Lem.~\ref{is-in-K14partialgraphs}
and let $v\in V(G)$ be such that $G-v$ is connected. By
considering the fate of vertex $v$, we obtain the recurrence
\begin{equation}\label{rec}
Z_I(G)=Z_I(G-v)+Z_I(G^v),
\end{equation}
where $G^v=G-N_G[v]$ and $N_G[v]=N_G(v)\cup\{v\}$. Let $G^v=\bigcup_{i=1}^cG_i^v$ be the partition of $G^v$ into its connected components. For each $i$ let $u_i\in N_G(v)$ be such that $N_G(u_i)\cap V(G_i^v)\neq\emptyset$. Owing to the connectedness of $G-v$,  a vertex $u_i$ must exist. Set $K_i=N_G(u_i)\cap V(G_i^v)$.

\begin{clm}\label{block3}
The set $K_i$ is a simplicial block in $G_i^v$.
\end{clm}

\begin{proof}
The proof is quite similar to that of Claim \ref{block}. We first prove that $K_i$ is a block. Suppose there is an independent set $I$ in $G[K_i]$ of size $|I|=3$. Then $u_i,v$ and the vertices of $I$ would form an induced $K_{1,4}$ in $G$ with $u_i$ in the center. As this is a contradiction, we  conclude that $K_i$ is a 2-clique. To prove that $K_i$ is, in fact, a block, notice that there is no edge between $v$ and $K_i$. Thus, we cannot have $|K_i|=5$ because then $v$ would be an isolated vertex in $G[N(u_i)]$ -- a contradiction with the assumption on $G$. If, however, $|K_i|=4$ then $v$ is the (only) isolated vertex in $G[N(u_i)]$ and, consequently, $\delta(G[K_i])\ge1$.

It remains to show that the block $K_i$ is simplicial, that is,
for every $w\in K_i$, the set $N_{G_i^v}(w)\setminus K_i$ is a
block in $G_i^v-K_i$. This, however, can be proved mutatis
mutandis as in the proof of Claim~\ref{block}.\qed
\end{proof}


For the first term of  recurrence (\ref{rec}) we apply (\ref{rec}) recursively. In view of Claim \ref{block3}, to the second term of recurrence (\ref{rec}) one can apply formula (\ref{mnoz}) and then each term $Z_I(G_i^v)$ can be approximated based on~(\ref{tele})~and~(\ref{prob}).

\subsection{The Remainder of the Proof of Lemma~\ref{is-in-K14partialgraphs}} \label{p-lem}

Hence, it remains to approximate $\mathbb{P}_{G}(K\cap
\mathbf{I}=\emptyset)=\tfrac{Z_I(G-K)}{Z_I(G)}$ within $1\pm\tfrac{\eps}n$, where $K$ is a simplicial block in $G$.
We set $N_v:=N_G(v)$ and  formulate the following recurrence
relation by considering how an independent set may intersect $K$:

$$Z_I(G)=Z_I(G-K)+\sum_{v\in K}Z_I(G-(N_v\cup K))+\frac12\sum_{uv\notin G[K]}Z_I(G-(N_u\cup N_v\cup
K))$$ or equivalently, after dividing sidewise by $Z_I(G-K)$,
$$\frac{Z_I(G)}{Z_I(G-K)}=1+\sum_{v\in
K}\frac{Z_I(G-(N_v\cup K))}{Z_I(G-K)}+\frac12\sum_{uv\notin
G[K]}\frac{Z_I(G-(N_u\cup N_v\cup K))}{Z_I(G-K)}.$$

Here and throughout the inner summation ranges over all \emph{ordered} pairs of  \emph{distinct} vertices of $K$ such that $\{u,v\}\notin G[K]$.
 At this point, in view of symmetry, it seems redundant to consider ordered pairs (and consequently have the factor of $\tfrac12$ in front of the sum), but we break the symmetry right now as we further observe that

$$\frac{Z_I(G-(N_u\cup N_v\cup K))}{Z_I(G-K)}=\frac{Z_I(G-(N_u\cup N_v\cup
K))}{Z_I(G-(N_v\cup K))}\cdot \frac{Z_I(G-(N_v\cup K))}{Z_I(G-K)}.$$

By Claim \ref{block}, $N_v\setminus K$ is a simplicial block in $G-K$. We need to show that, similarly, $N_u\setminus(N_v\cup K)$ is a simplicial block in $G-(N_v\cup K)$.

\begin{clm}\label{block4} Let $K$ be a simplicial
block in  $G$ and let $u,v\in K$ be such that $u\neq v$ and $uv\notin
G[K]$. Further, let $H:=G-(N_G(v)\cup K).$ Then $N_H(u)$ is a simplicial
block in $H.$
\end{clm}
\begin{proof} By Claim \ref{block}, the set $N_G(u)\setminus K$ is a simplicial block in $G-K$. Apply Fact \ref{indu} to  $N_G(u)\setminus K$ and $G-K$ with $V'=V(H)$.
\qed
\end{proof}

Let
$$\Pi_G(K):={\mathbb P}(K\cap \mathbf{I}=\emptyset)=\frac{Z_I(G-K)}{Z_I(G)},$$
where $\mathbf{I}$ is a random independent set of $G$.
Finally, setting $K_v:=N_v\setminus K$ and $K_{uv}:=N_u\setminus(N_v\cup K)$,  and rewriting $G-(N_v\cup K)=G-K-K_v$, we get the recurrence for the probabilities:
\begin{equation*}
{\Pi}^{-1}_{G}(K)=1+\sum\limits_{v\in
K}{\Pi}_{G-K}(K_v)\left(1+\frac12\sum\limits_{uv\notin G[K]}{\Pi}_{G-K-K_v}(K_{uv})\right).
\end{equation*}

This recurrence, in principle, allows one to compute ${\Pi}_{G}(K)$ exactly, but only in an exponential number of steps.
Instead,
we will approximate it by a function $\Phi_G(K,t)$, also defined recursively, which ``mimics'' $\Pi_G(K)$ but has a built-in time counter $t$.

\begin{definition}\label{rec-phi} For every graph $G$, every simplicial block $K$ in $G$  and an integer  $t\in \mathbb{Z}_+$, the function $\Phi_G(K,t)$ is defined recursively as follows: $\Phi_G(K,0)=\Phi_G(K,1)=1$ as well as $\Phi_G(\emptyset,t)=1$,  while for $t\ge2$ and $K\neq\emptyset$
\begin{equation*}
\Phi^{-1}_G(K,t)=1+\sum\limits_{v\in K}\Phi_{G-K}(K_v,t-1)\left(1+\frac12\sum\limits_{uv\notin G[K]}\Phi_{G-K-K_v}(K_{uv},t-2)\right).
\end{equation*}
\end{definition}

Now we are ready to state the  algorithm {\sl CountMatchings} for computing $Z_M(H)$  for any connected $(3,3)$-graph $H$ and its subroutine {\sl CountIS} for computing $Z_I(G)$ in a subgraph of  $G=L(H)$ containing a simplicial block $K$.

\begin{algorithm}[h]
\caption{{\sl CountMatchings}($H,t$)}\label{countmatchalg}
\begin{algorithmic}[1]

\State $G:=L(H).$

\State $Z_M:=1,\, F:=G.$

\While {$F\neq\emptyset$ }

\State Pick $v\in V(F)$ s.t. $F-v$ is connected.
\State $F^v:=F-N_F[v]$
\State If $F^v=\emptyset$ then $Z_M=Z_M+1$ and go to Line 3.
\State $F^v=\bigcup_{i=1}^cF_i^v$, where $F_i^v$ are  connected components of $F^v$.
\For  {$i:=1$ to c }
\State {Find  $K_i$ as in Claim \ref{block3}}
\EndFor
\State $Z_M:= Z_M+\prod_{i=1}^c${\sl CountIS}$(F_i^v,K_i,t)$
\State $F:=F-v$
\EndWhile
\State Return $Z_M$
\end{algorithmic}
\end{algorithm}

\begin{algorithm}[h]
\caption{{\sl CountIS}($G,K,t$)}\label{countis}
\begin{algorithmic}[1]
\State Let $V(G)=\bigcup_{i=1}^mK_i$ be a partition of $V(G)$ as in Claim \ref{block2} with $K_1=K$.
\State $Z_I:=1, F:=G$
\For {$i=1$ to $m$}
\State $Z_I := \frac {Z_I}{\Phi_F(K_i,t)}$
\State $F := F-K_i$
\EndFor
\State Return $Z_I$
\end{algorithmic}
\end{algorithm}

We will show that already for $t=\Theta(\log n)$, when $\Phi$ can be easily computed in polynomial time, the two functions become close to each other.

Note that both quantities, ${\Pi}_{G}(K)$ and $\Phi_G(K,t)$, fall into the interval $[\tfrac1{9},1]$. The lower bound is due to the fact that a block has at most 4 vertices and each of them has degree at most 2 in $G^c$, so that the total number of terms in the denominator is at most nine, five of them do not exceed 1, while eight of them do not exceed $\tfrac12$. Our goal is to approximate ${\Pi}_{G}(K)$ by $\Phi_G(K,t)$, for a suitably chosen $t$, within the multiplicative factor of $1\pm \epsilon/n$. In view of the above lower bound, it suffices to show that
$|{\Pi}_{G}(K)-\Phi_G(K,t)|\le\frac{\epsilon}{9n}.$

To achieve this goal, we will use the correlation decay technique
which boils down to establishing a recursive bound on the above
difference (cf. \cite{bgknt}). The success of this method depends
on the right choice of a pair of functions $g$ and $h$, with
$g:[0,1]\to\Re$, such that they are inverses of each other, that
is, $g\circ h\equiv1$. Then we define a function $f_K$ of
$|K|+2e(G^c[K])$ variables, one for each vertex and each (ordered)
non-edge of $G[K]$, as follows. Let
$\bold{z}=(z_1,\dots,z_{|K|},z_{uv}:uv\notin G[K])$ be a vector of
variables of that function. For ease of notation, we denote the
set of all indices of the coordinates of function $f_K$ by $J$,
that is, we set $J:=K\cup\{(u,v):\{u,v\}\notin G[K]\}.$ Then
\begin{equation}\label{fkz}
f_K(\bold{z}):=f(\bold{z})=g\left(\left\{1+\sum\limits_{v\in
K}h(z_v)\left(1+\frac12\sum\limits_{uv\notin
G[K]}h(z_{uv})\right)\right\}^{-1}\right). \end{equation}
 To
understand the reason for this set-up, put $x:=g({\Pi}_{G}(K))$,
$x_v:=g({\Pi}_{G-K}(K_v))$, $ x_{uv}:=g({\Pi}_{G-K-K_v}(K_{uv})),$
and, correspondingly,
$$y:=g(\Phi_{G}(K,t))\quad y_v:=g(\Phi_{G-K}(K_v,t-1))\quad y_{uv}:=g(\Phi_{G-K-K_v}(K_{uv},t-2)).$$
 Then, $f(\bold{x})=x$ and
$f(\bold{y})=y$, and so the difference we are after can be
expressed as $|x-y|=|f(\bold{x})-f(\bold{y})|.$ Thus, we are in
position to apply the Mean Value Theorem to $f$ and conclude that
there exists $\alpha\in[0,1]$ such that, setting
$\bold{z_\alpha}=\alpha\bold{x}+(1-\alpha)\bold{y}$,
$$|f(\bold{x})-f(\bold{y})|=|\nabla f(\bold{z_\alpha})(\bold{x}-\bold{y})|\le|\nabla f(\bold{z_\alpha})|\times \max_{\kappa\in J}|x_{\kappa}-y_{\kappa}|.$$

It remains to  bound $\max_z|\nabla f(\bold{z})|$ from above,
uniformly by a constant $\gamma<1$. Then, after iterating at most
$t$ but at least $t/2$ times, we will arrive at a triple
$(G',K',t')$, where $G'$ is an induced subgraph of $G$, $K'$ is a
block in $G'$, and $t'\in\{0,1\}$. At this point, setting
$\mu_g:=|g(1)|+|\max_s g(s))|$, we will obtain the ultimate bound
$$|x-y|\le \gamma^{t/2}\times|g({\Pi}_{G'}(K'))-g(1)|\le \gamma^{t/2}\times \mu_g \le \frac{\epsilon}{9n},$$
\begin{equation}\label{t}\mbox{for}\qquad
t\ge2\log((9\mu_g n)/\eps)/\log(1/\gamma).
\end{equation}

In \cite{bgknt}, to estimate  $|\nabla f(\bold{z})|$ for a similar function $f$, the authors chose $g(s)=\log s$ and $h(s)=e^s$. This choice, however, does not work for us. Instead, we set
$g(s)=s^{1/4}$ and $h(s)=s^4.$ Then, $\mu_g=2$ and
$$|\nabla f(\bold{z})|\le\sum_{\kappa\in J}\Bigg|\frac{\partial f(\bold{z})}{\partial z_\kappa}\Bigg|=
\frac{\sum\limits_{v\in K}\left\{z_v^3+\frac12\sum\limits_{uv\notin G[K]}(z_v^3z_{uv}^4+z_v^4z_{uv}^3)\right\}}{\left\{1+\sum\limits_{v\in
K}z_v^4\left(1+\frac12\sum\limits_{uv\notin G[K]}z_{uv}^4\right)\right\}^{5/4}}.$$

Observe that $f_K$ depends only on the isomorphism type of $G[K]$, a graph on up to 4 vertices, with no independent set of size 3, and with no isolated vertex when $|K|=4$. Let us call all these graphs \emph{block graphs}. One block graph is given in Figure \ref{1} below.

\begin{figure}
\centering
\begin{picture}(0,0)%
\includegraphics{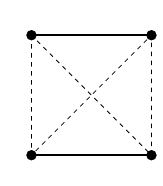}%
\end{picture}%
\setlength{\unitlength}{3158sp}%
\begingroup\makeatletter\ifx\SetFigFont\undefined%
\gdef\SetFigFont#1#2#3#4#5{%
  \reset@font\fontsize{#1}{#2pt}%
  \fontfamily{#3}\fontseries{#4}\fontshape{#5}%
  \selectfont}%
\fi\endgroup%
\begin{picture}(1605,1836)(-314,-664)
\put(301,-211){\makebox(0,0)[lb]{\smash{{\SetFigFont{10}{12.0}{\rmdefault}{\mddefault}{\updefault}{\color[rgb]{0,0,0}$z_{24}$}%
}}}}
\put(-149,989){\makebox(0,0)[lb]{\smash{{\SetFigFont{10}{12.0}{\rmdefault}{\mddefault}{\updefault}{\color[rgb]{0,0,0}$z_1$}%
}}}}
\put(-149,-586){\makebox(0,0)[lb]{\smash{{\SetFigFont{10}{12.0}{\rmdefault}{\mddefault}{\updefault}{\color[rgb]{0,0,0}$z_4$}%
}}}}
\put(1126,989){\makebox(0,0)[lb]{\smash{{\SetFigFont{10}{12.0}{\rmdefault}{\mddefault}{\updefault}{\color[rgb]{0,0,0}$z_2$}%
}}}}
\put(1126,-586){\makebox(0,0)[lb]{\smash{{\SetFigFont{10}{12.0}{\rmdefault}{\mddefault}{\updefault}{\color[rgb]{0,0,0}$z_3$}%
}}}}
\put(-299,164){\makebox(0,0)[lb]{\smash{{\SetFigFont{10}{12.0}{\rmdefault}{\mddefault}{\updefault}{\color[rgb]{0,0,0}$z_{14}$}%
}}}}
\put(1276,164){\makebox(0,0)[lb]{\smash{{\SetFigFont{10}{12.0}{\rmdefault}{\mddefault}{\updefault}{\color[rgb]{0,0,0}$z_{23}$}%
}}}}
\put(301,614){\makebox(0,0)[lb]{\smash{{\SetFigFont{10}{12.0}{\rmdefault}{\mddefault}{\updefault}{\color[rgb]{0,0,0}$z_{13}$}%
}}}}
\end{picture}%
\caption{The essential block graph.}\label{1}
\end{figure}

In  a sense we just need to consider this  one block graph. Indeed, the complement of every block graph is contained in the complement of the block graph in Figure \ref{1}. Hence, it suffices to maximize $|\nabla f(\bold{z})|$ just for this graph. Our computational task is, therefore, to bound from above

{\small{
\begin{align*}
F&(\mathbf{z}) = \| \nabla(\mathbf{z}) \|_{1} =
\frac{1}{4}
 \bigg{(}\!
1+z_1^4+z_2^4+z_3^4+z_4^4 +\\
&\qquad\qquad\qquad\quad \frac{1}{2}\Big{(} z_{14}^4\big{(} z_1^4+z_4^4\big{)} + z_{13}^4\big{(} z_1^4+z_3^4\big{)} +
z_{23}^4\big{(} z_2^4+z_3^4\big{)} + z_{24}^4\big{(} z_2^4+z_4^4\big{)}\!\Big{)}\!\!\bigg{)}^{-5/4} \times\\
&\bigg{(}\!
2z_1^3 \big{(} 2 + z_{14}^4 + z_{13}^4\big{)} +
2z_2^3 \big{(} 2 + z_{23}^4 + z_{24}^4\big{)} +
2z_3^3 \big{(} 2 + z_{13}^4 + z_{23}^4\big{)} +
2z_4^3 \big{(} 2 + z_{14}^4 + z_{24}^4\big{)} \ +\\
& \qquad\qquad\qquad\qquad\quad\ 2z_{14}^3\big{(} z_1^4+z_4^4\big{)} +
2z_{13}^3\big{(} z_1^4+z_3^4\big{)} +
2z_{23}^3\big{(} z_2^4+z_3^4\big{)} +
2z_{24}^3\big{(} z_2^4+z_4^4\big{)}\!\!
\bigg{)}.
\end{align*}
}}
One can show (using, e.g., \emph{Mathematica}) that $F(\mathbf{z})< 0.971$ for $0\le z_i\le 1$ and $0\le z_{ij}\le 1$. Thus, we have (\ref{t})
 with $\mu_g=2$ and, say,  $\gamma=0.98=\tfrac{49}{50}$. Summarizing, the running time of computing $\Phi_G(K,t)$ in Step 4 of Algorithm~\ref{countis} is ${12}^{t}$ since there at most 12 expressions
 to compute in each step of the recurrence relation (see Def.~\ref{rec-phi}). Also, {\sl CountIS} takes at most $|V(F_i^v)|12^t$ steps and hence, Line 11 of
 {\sl CountMatchings} takes $n{12}^t$ steps and is invoked at most $n$ times.
Consequently, with $t=2\lceil\log((18n)/\eps)/\log(50/49)\rceil$
we get the running time of our algorithm of order
$O\left(n^2(n/\eps)^{\log_{50/49}144}\right)$.

\begin{rem}
With basically the same proof we can construct an FPTAS for
calculating the partition function
$Z_M(H,\lambda)=\sum_{M}\lambda^{|{M}|},$ where the sum runs over
all matchings in $H$, for any constant $\lambda\in (0,1.077]$. The $\lambda$ factor will appear in front
of each summation in (\ref{fkz}), which one can neutralize by
setting $h(s)=\tfrac{s^4}{\lambda}$ and $g(s)=(\lambda s)^{1/4}.$
\end{rem}

\section{Summary, Discussion, and Further Research}\label{disc} The main result of this paper (Thm.~\ref{main}) establishes an FPTAS for the problem $\#M(3,3)$ of counting the number of matchings in a $(3,3)$-graph. A reformulation of Thm.~\ref{main} in terms of graphs yields an FPTAS for  the problem of counting independent
sets in every graph which is the intersection graph of a
$(3,3)$-graph. As mentioned earlier, every intersection graph of a
$(3,3)$-graph is 4-claw-free. Moreover, its maximum degree is at
most six. We wonder if there exists an FPTAS for  the problem of
counting independent sets in every 4-claw-free graph with maximum
degree at most 6. Lemma~\ref{is-in-K14partialgraphs} falls short
of proving that. The missing part is due to our inability to
repeat the above estimates for 2-cliques of size five.

In an earlier paper \cite{KRS-analco} three of the authors have
found an FPRAS for the number of matchings in $k$-graphs without
3-combs. As their intersection graphs are claw-free, it follows
from the above mentioned result  on independent sets in
\cite{bgknt,suka} that there is also an FPTAS  for the number of
matchings in $(k,r)$-graphs without 3-combs, for any fixed $r$. In
view of this conclusion and Thm.~\ref{main}, we raise the question
if for all $k\leq 5$ and $r$ there is an FPTAS (or at least FPRAS) for
the problem $\#M(k,r)$. The first open instance is that of
$(3,4)$-graphs. For $k=4, 5$, to avoid recurrences of depth
$k-1\ge3$, as an intermediate step, one could first consider  the
restriction of the class of $(k,r)$-graphs to those without a
4-comb, that is, to those whose intersection graphs are
4-claw-free. Here, the first open instance is that of
$(4,3)$-graphs without 4-combs. In general, it would be also very
interesting
   to elucidate the status
   of the problem for arbitrary $k$-graphs for $k = 3, 4$ and 5, or for
   some generic subclasses of them.

\section*{Acknowledgements}
We thank Martin Dyer and Mark Jerrum for stimulating discussions
on the subject of this paper and the referees for their valuable
comments. We are also very grateful to Michael Simkin who pointed out and fixed an error (cf. Lemma~\ref{is-in-K14partialgraphs} and the proof of Claim~\ref{block}) in an earlier version of this paper~\cite{DKRS}.

\newpage

\section*{Appendix: Mathematica expressions}\label{appendix}

First we define $F(\mathbf{z})$ function:

{\small{
\begin{verbatim}
F[z1_, z2_, z3_, z4_, z14_, z13_, z23_, z24_]:=\
1/4(1+z1^4+z2^4+z3^4+z4^4+\
      1/2(z14^4(z1^4+z4^4)+z13^4(z1^4+z3^4)+\
      z23^4(z2^4+z3^4)+z24^4(z2^4+z4^4)))^(-5/4)\
  (2z1^3(2+z14^4+z13^4)+2z2^3(2+z23^4+z24^4)+\
   2z3^3(2+z13^4+z23^4)+2z4^3(2+z14^4+z24^4)+\
   2z14^3(z1^4+z4^4)+2z13^3(z1^4+z3^4)+\
   2z23^3(z2^4+z3^4)+2z24^3(z2^4+z4^4))
\end{verbatim}
}}

\noindent
Next we find the absolute maximum:

{\small{
\begin{verbatim}
NMaximize[{F[z1, z2, z3, z4, z14, z13, z23, z24],\
     0<=z1<=1 && 0<=z2<=1 && 0<=z3<=1 && 0<=z4<=1 &&
     0<=z14<=1 && 0<=z13<=1 && 0<=z23<=1 && 0<=z24<=1},\
     {z1, z2, z3, z4, z14, z13, z23, z24}]
\end{verbatim}
}}

\noindent
obtaining that
\[
F(\mathbf{z}) \le F(\zeta, \zeta, \zeta, \zeta, 1, 1, 1, 1) \sim 0.970247,
\]
where $\zeta \sim 0.695347 $.

\end{document}